\newtheorem{Theorem}{Theorem}
\newtheorem{Lemma}[Theorem]{Lemma}
\newtheorem{Proposition}[Theorem]{Proposition}
\newtheorem{Conjecture}[Theorem]{Conjecture}
\theoremstyle{definition}
\newtheorem{Remark}[Theorem]{Remark}
\newcommand{\bv}{\mathbf{v}}
\newcommand{\ZZZ}{\mathbb Z}
\newcommand{\abs}[1]{\lvert#1\rvert}
\DeclareMathOperator{\Gal}{Gal}
\begin{document}
\title{Galois groups of multivariate Tutte polynomials}
\author{Adam Bohn}
\author[1]{Peter J. Cameron}
\author[2]{Peter M\"uller}
\affil[1]{School of Mathematical Sciences, Queen Mary, University of London, Mile End Road, London E1 4NS, UK.}
\affil[2]{Institut f\"ur Mathematik, Universit\"at W\"urzburg, Campus Hubland Nord, 97074 W\"urzburg, Germany}
\maketitle

\begin{abstract}
The multivariate Tutte polynomial $\hat Z_M$ of a matroid $M$ is a generalization of the standard two-variable version, obtained by assigning a separate variable $v_e$ to each element $e$ of the ground set $E$.  It encodes the full structure of $M$.  Let $\bv = \{v_e\}_{e\in E}$, let $K$ be an arbitrary field, and suppose $M$ is connected.  We show that $\hat Z_M$ is irreducible over $K(\bv)$, and give three self-contained proofs that the Galois group of $\hat Z_M$ over $K(\bv)$ is the symmetric group of degree $n$, where $n$ is the rank of $M$.  An immediate consequence of this result is that the Galois group of the multivariate Tutte polynomial of any matroid is a direct product of symmetric groups.  Finally, we conjecture a similar result for the standard Tutte polynomial of a connected matroid.
\end{abstract}

Let $M$ be a finite matroid on the set $E$. The rank of $M$ is denoted by $r(M)$, and $r_M$ is the rank function on $M$. With this notation we have $r(M)=r_M(E)$.  To avoid degenerate examples and exceptions a connected matroid will be assumed throughout to have positive rank (our results are trivial for a matroid having zero rank). Following the usual notation in matroid theory, we will write $E\setminus e$ instead of $E\setminus\{e\}$ for $e\in E$, and denote by $M|A$ the restriction of $M$ to some $A\subset E$.

For each $e\in E$ let $v_e$ be a variable, and let $\bv$ be the collection of these variables.  If $A$ is a subset of $E$, we will denote by $\bv_A$ the set $\{v_e\}_{e\in A}$.  In \cite{Sokal:Tutte} Sokal defines the following multivariate version of the Tutte polynomial of a matroid $M$ \footnote{The multivariate Tutte polynomial for matroids has in fact been discovered a number of times; it appears, for example, in \cite{Kung:Twelve} as the ``Tugger polynomial''.}.

For another variable $q$ set
\[
\tilde Z_M(q,\bv)=\sum_{A\subseteq E}q^{-r_M(A)}\prod_{e\in A}v_e.
\]
Then $\tilde Z_M(q,\bv)$ is a polynomial in $\frac{1}{q}$ with coefficients in $\ZZZ[\bv]$.

For our purpose it is more convenient to use the following minor modification:
\[
\hat Z_M(q,\bv)=\sum_{A\subseteq E}q^{r(M)-r_M(A)}\prod_{e\in A}v_e.
\]
Then
\[
\hat Z_M(q,\bv)=q^{r(M)}\tilde Z_M(q,\bv),
\]
and $\hat Z_M(q,\bv)$ is a polynomial of degree $r(M)$ in $q$, which is monic if $M$ contains no loops.  In particular, if $M$ is connected then $\hat Z_M(q,\bv)$ is monic.  Combinatorially, $\hat Z_M(q,\bv)$ is a generating function for the content and rank of the subsets of $E$, and thus encodes all of the information about $M$.

By making the substitutions
\begin{align*}
    &q\leftarrow (x-1)(y-1)\\
    &v_e\leftarrow y-1
\end{align*}
for each $e\in E$, and multiplying by a prefactor $(y-1)^{-r(M)}$, we obtain the standard bivariate Tutte polynomial:
    \[T_M(x,y)=\sum_{A\subseteq E}(x-1)^{r(M)-r_M(A)}(y-1)^{|A|-r_M(A)}.\]

Thus $T_M$ is essentially equivalent to a special case of $\hat Z_M$ in which the same variable is assigned to every element of $E$.

\begin{Theorem}\label{T:main}
Let $M$ be a finite connected matroid with positive rank $n=r(M)$, and let $\hat Z_M(q,\bv)$ be as defined above. Let $K$ be an arbitrary field.  Then the Galois group of $\hat Z_M(q,\bv)$ over $K(\bv)$ is the symmetric group on the $n$ roots of $\hat Z_M(q,\bv)$.
\end{Theorem}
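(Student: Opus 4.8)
The plan is to induct on the number of elements $m=\abs{E}$, exploiting the deletion–contraction identity
\[
\hat Z_M=\hat Z_{M\setminus e}+v_e\,\hat Z_{M/e},
\]
which holds for every $e$ because a connected matroid with $m\ge 2$ has neither loops nor coloops, so that $r(M\setminus e)=r(M)=n$ and $r(M/e)=n-1$. Here $\hat Z_{M\setminus e}$ is monic of degree $n$ and $\hat Z_{M/e}$ is monic of degree $n-1$ in $q$, both with coefficients in $F_0:=K(\bv_{E\setminus e})$, and $\hat Z_M$ is linear in $v_e$. The base case $m=1$ is trivial, since then $\hat Z_M=q+v_e$ and $n=1$. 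For the inductive step I would invoke Tutte's theorem that, for $e$ in a connected matroid with $m\ge 2$, at least one of $M\setminus e$ and $M/e$ is connected, and handle the two cases by specializing the one remaining variable $v_e$.

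\emph{Case A} ($M\setminus e$ connected). Then $M\setminus e$ has rank $n$ and $m-1$ elements, so by induction $\Gal(\hat Z_{M\setminus e}/F_0)=S_n$; in particular $\hat Z_{M\setminus e}$ is separable of degree $n$. Setting $v_e=0$ gives $\hat Z_M|_{v_e=0}=\hat Z_{M\setminus e}$, an integral and (by separability) unramified specialization of the monic polynomial $\hat Z_M\in F_0[v_e][q]$. By the standard specialization principle, the decomposition group at a place above $v_e=0$ is isomorphic to the Galois group of the residue-field extension, so $S_n\cong\Gal(\hat Z_{M\setminus e}/F_0)$ embeds into $G:=\Gal(\hat Z_M/K(\bv))$. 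Lower semicontinuity of the number of distinct roots forces $\hat Z_M$ to be separable as well, whence $G\le S_n$, and together with the embedding this gives $G=S_n$.

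\emph{Case B} ($M/e$ connected). Now induction gives $\Gal(\hat Z_{M/e}/F_0)=S_{n-1}$, so $\hat Z_{M/e}$ is irreducible of degree $n-1$. First I would prove $\hat Z_M$ irreducible over $K(\bv)$: by Gauss's lemma and linearity in $v_e$ this reduces to $\gcd(\hat Z_{M\setminus e},\hat Z_{M/e})=1$ in $F_0[q]$, i.e.\ (as $\hat Z_{M/e}$ is irreducible of degree $n-1<n$) to $\hat Z_{M/e}\nmid\hat Z_{M\setminus e}$. If $\hat Z_{M\setminus e}=(q-\beta)\hat Z_{M/e}$, then comparing degrees in each $v_f$ and using multilinearity of $\hat Z$ in $\bv$ forces $\beta\in K$; comparing the coefficient of the top monomial $\prod_{f\in E\setminus e}v_f$ (which arises in each factor only from $A=E\setminus e$, with $q$-coefficient $1$) then yields $1=q-\beta$, a contradiction. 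Thus $\hat Z_M$ is irreducible and $G$ is transitive. Next I would pass to the place $v_e\to\infty$: the Newton polygon of $\hat Z_M$ there has one segment of slope $-1$ and length $1$ (a single root escaping to infinity) and one segment of slope $0$ and length $n-1$ whose residual polynomial is exactly $\hat Z_{M/e}$. Hence the decomposition group of a place above $v_e=\infty$ fixes the escaping root and surjects onto $\Gal(\hat Z_{M/e}/F_0)=S_{n-1}$ acting faithfully on the remaining $n-1$ roots. Therefore a point stabilizer of the transitive group $G$ contains $S_{n-1}$, so $\abs{G}\ge n\cdot(n-1)!=n!$ and $G=S_n$.

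The main obstacle is \emph{Case B}. The irreducibility step is delicate precisely because $M\setminus e$ may be disconnected, so one cannot simply quote the inductive hypothesis for $\hat Z_{M\setminus e}$, and the coefficient comparison above is what rescues it. Even more care is needed for the specialization at $v_e=\infty$: unlike $v_e=0$ it is not integral, one root runs off to infinity, and one must argue via the Newton polygon that the decomposition group nonetheless surjects onto the residual symmetric group $S_{n-1}$ while fixing the escaping root. (The separability invoked throughout is automatic once the inductive Galois groups are known to be symmetric.) That the statement admits three proofs suggests alternative endgames---for instance proving $G$ primitive and producing a transposition as the inertia at a simple zero of the discriminant, then applying Jordan's theorem---but the induction through one of the two connected minors seems to me the most economical route.
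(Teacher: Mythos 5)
Your proposal is correct, and it follows essentially the same route as the first of the paper's three proofs: induction through the deletion--contraction identity of Lemma \ref{L:sum} and Tutte's connectivity dichotomy, with the case $M\setminus e$ connected settled by specializing $v_e=0$, and the case $M/e$ connected settled by working at the place $v_e=\infty$ and using transitivity to promote a point stabilizer containing $S_{n-1}$ to all of $S_n$. Two local differences are worth recording. For irreducibility the paper runs a separate induction (Lemma \ref{L:irr}), pinning down a hypothetical factorization by evaluating at $q=1$ and $q=0$; you instead need irreducibility only in Case B and obtain it directly from irreducibility of $\hat Z_{M/e}$ (supplied by the inductive Galois hypothesis) together with your comparison of $v_f$-degrees and of the coefficient of $\prod_{f\in E\setminus e}v_f$, which equals $1$ in both $\hat Z_{M\setminus e}$ and $\hat Z_{M/e}$ --- a genuine streamlining. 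For the endgame of Case B, the paper substitutes $t=1/v_e$ and passes to the reciprocal polynomial $X^n\bigl(tU(1/X)+V(1/X)\bigr)$ (Lemma \ref{L:U+yV}), which moves your ``escaping root'' to $0$, makes the specialization at $t=0$ integral over the local ring $k[t]_{(t)}$, and lets one quote the same specialization statement (Proposition \ref{P:gal}) as in Case A; this is equivalent in substance to your Newton polygon/decomposition group argument, but it avoids the valuation-theoretic machinery (surjectivity of the decomposition group onto the residue Galois group, uniqueness of the root of negative valuation) whose verification your sketch correctly outlines but leaves to the reader.
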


For $e\in E$, let $M\setminus e$ be the deletion of $e$, and $M/e$ the contraction of $e$. Note that $M\setminus e$ and $M/e$ are matroids on the set $E\setminus e$.  The essential tool for our first proof is a theorem of Tutte (see \cite[Theorem 4.3.1]{Oxley:Book}), which says that connectivity of $M$ implies that at least one of the matroids $M\setminus e$ or $M/e$ is connected.  Since $M$ is connected, $e$ is not a coloop, so $r(M\setminus e)=r_M(E\setminus e)=r_M(E)=r(M)$. By \cite[Prop.\ 3.1.6]{Oxley:Book} we have that $r(M/e)=r_M(E)-r_M(e)$. Now $r_M(e)=1$, since $e$ is not a loop. So $r(M/e)=r(M)-1$.

The proof will be based on some lemmas.

\begin{Lemma}\label{L:sum}
Let $M$ be a finite connected matroid and $e\in E$. Then
\[
\hat Z_M=\hat Z_{M\setminus e}+v_e\hat Z_{M/e}.
\]
\end{Lemma}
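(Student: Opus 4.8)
The plan is to prove the identity by partitioning the defining sum for $\hat Z_M$ according to whether the distinguished element $e$ belongs to the index set $A$. Starting from
\[
\hat Z_M=\sum_{A\subseteq E}q^{r(M)-r_M(A)}\prod_{f\in A}v_f,
\]
I would write $\hat Z_M=S_0+S_1$, where $S_0$ collects the terms with $e\notin A$ and $S_1$ collects those with $e\in A$, and then identify $S_0=\hat Z_{M\setminus e}$ and $S_1=v_e\hat Z_{M/e}$.

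For $S_0$ the subsets $A$ range precisely over the subsets of $E\setminus e$, which is the ground set of $M\setminus e$. Two facts make this block collapse to $\hat Z_{M\setminus e}$: first, restriction does not change rank on the smaller ground set, so $r_M(A)=r_{M\setminus e}(A)$ for all $A\subseteq E\setminus e$; and second, $r(M\setminus e)=r(M)$, which holds because $e$ is not a coloop (recorded in the discussion preceding the lemma, using connectivity of $M$). After these substitutions every exponent $r(M)-r_M(A)$ equals $r(M\setminus e)-r_{M\setminus e}(A)$, so $S_0=\hat Z_{M\setminus e}$.

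For $S_1$ I would write each such $A$ as $A'\cup\{e\}$ with $A'\subseteq E\setminus e$ and pull the factor $v_e$ out front. The surviving exponent is $r(M)-r_M(A'\cup e)$, and the contraction rank formula $r_{M/e}(A')=r_M(A'\cup e)-r_M(e)$ together with $r_M(e)=1$ (as $e$ is not a loop) gives $r_M(A'\cup e)=r_{M/e}(A')+1$. Combining this with $r(M/e)=r(M)-1$ yields
\[
r(M)-r_M(A'\cup e)=\bigl(r(M)-1\bigr)-r_{M/e}(A')=r(M/e)-r_{M/e}(A'),
\]
so $S_1=v_e\hat Z_{M/e}$, and adding the two blocks proves the lemma. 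The entire argument is bookkeeping, so the only point requiring care is checking that the $q$-exponents line up: the upward shift by $r_M(e)=1$ produced by the contraction is exactly cancelled by the downward shift $r(M/e)=r(M)-1$. This cancellation, like the equality $r(M\setminus e)=r(M)$ used for $S_0$, is precisely where the hypothesis that $e$ is neither a loop nor a coloop (a consequence of the connectivity of $M$) enters.
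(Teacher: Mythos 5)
Your proof is correct, and it takes a different route from the paper. The paper does not re-derive the deletion--contraction identity from scratch: it quotes Sokal's identity $\tilde Z_M=\tilde Z_{M\setminus e}+\frac{v_e}{q}\tilde Z_{M/e}$ for an element that is neither a loop nor a coloop, multiplies through by $q^{r(M)}$, and then uses the rank computations $r(M\setminus e)=r(M)$ and $r(M/e)=r(M)-1$ (recorded just before the lemma) to absorb the powers of $q$ into the normalization of $\hat Z$. You instead prove the identity directly from the definition, splitting the sum over $A\subseteq E$ according to whether $e\in A$, and using $r_{M\setminus e}(A)=r_M(A)$ on one block and $r_{M/e}(A')=r_M(A'\cup e)-r_M(e)$ with $r_M(e)=1$ on the other; this is in effect a proof of Sokal's identity itself, adapted to the $\hat Z$ normalization. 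Both arguments lean on exactly the same rank facts and on $e$ being neither a loop nor a coloop; what yours buys is self-containedness (no appeal to \cite{Sokal:Tutte}) and it makes transparent that connectivity enters only through that loop/coloop condition, while the paper's version is shorter at the cost of an external citation and a small bookkeeping step converting between $\tilde Z$ and $\hat Z$.
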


\begin{proof}
Since $M$ is connected, $e$ is neither a loop nor a coloop. By \cite[(4.18a)]{Sokal:Tutte} $\tilde Z_M=\tilde Z_{M\setminus
  e}+\frac{v_e}{q}\tilde Z_{M/e}$, hence
\[
\hat Z_M=q^{r(M)-r(M\setminus e)}\hat Z_{M\setminus
  e}+q^{r(M)-r(M/e)}\frac{v_e}{q}\hat Z_{M/e}.
\]
The claim then follows from the previous determination of the ranks of $E\setminus e$ and $E/e$.
\end{proof}

As an intermediate step in the proof of the theorem, we need to know that $\hat Z_M$ is irreducible over $K(\bv)$. As $T_M$ is essentially a specialization of $\hat Z_M$, this would follow from \cite{Merino:Tutte} in the case where $K$ has characteristic zero. However, the multivariate case allows for a much simpler proof, and one which holds for any characteristic.

\begin{Lemma}\label{L:irr}
Let $M$ be a finite connected matroid. Then $\hat Z_M$ is irreducible over $K(\bv)$.
\end{Lemma}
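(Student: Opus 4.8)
The plan is to prove this by induction on $\abs{E}$, turning irreducibility into a coprimality statement by means of the deletion--contraction formula of Lemma~\ref{L:sum}. Since $M$ is connected, $\hat Z_M$ is monic of degree $n=r(M)$ in $q$, so by Gauss's Lemma it is irreducible over $K(\bv)$ if and only if it is irreducible in $K[\bv][q]$. Fix $e\in E$, write $F=K(\bv_{E\setminus e})$, and regard $\hat Z_M=\hat Z_{M\setminus e}+v_e\hat Z_{M/e}$ as a polynomial in $v_e$ over the principal ideal domain $F[q]$. It is \emph{linear} in $v_e$, with coefficients $\hat Z_{M/e},\hat Z_{M\setminus e}\in F[q]$, and a linear polynomial over a domain is irreducible once it is primitive; since $\hat Z_M$ is monic in $q$, irreducibility in $F[q,v_e]$ propagates (again by Gauss) to irreducibility over $F(v_e)=K(\bv)$. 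Hence it suffices to prove that $\hat Z_{M\setminus e}$ and $\hat Z_{M/e}$ are coprime as polynomials in $q$ over $F$, i.e.\ that $\gcd_{F[q]}(\hat Z_{M\setminus e},\hat Z_{M/e})$ is a unit. The base case is $n=1$, where $\hat Z_M$ has degree $1$ in $q$ and is automatically irreducible, so from now on I assume $n\ge 2$.

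For the inductive step I would use Tutte's theorem to pick $e$ so that $M\setminus e$ or $M/e$ is connected, and argue the two cases separately. Recall that $\deg_q\hat Z_{M\setminus e}=n$ and $\deg_q\hat Z_{M/e}=n-1$, and that both are monic (the restriction $M\setminus e$ is loopless because $M$ is, and in the second case $M/e$ is connected hence loopless). If $M\setminus e$ is connected, the induction hypothesis, applied to the smaller ground set $E\setminus e$, makes $\hat Z_{M\setminus e}$ irreducible over $F$; a common factor of positive degree would then have to be $\hat Z_{M\setminus e}$ itself and so divide the lower-degree polynomial $\hat Z_{M/e}$, which is impossible, giving coprimality at once. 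If instead $M/e$ is connected, the induction hypothesis makes $\hat Z_{M/e}$ irreducible over $F$, and the only way coprimality can fail is that $\hat Z_{M/e}\mid\hat Z_{M\setminus e}$. Division by the monic $\hat Z_{M/e}$ keeps the quotient in $K[\bv_{E\setminus e}][q]$, so this amounts to an identity $\hat Z_{M\setminus e}=(q-\rho)\hat Z_{M/e}$ with $\rho\in K[\bv_{E\setminus e}]$.

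Ruling out this last divisibility is the hard part, and I expect to settle it not by counting degrees in $q$ but by counting total degree in the variables $\bv$. In each of $\hat Z_{M\setminus e}$ and $\hat Z_{M/e}$ the unique monomial of top $\bv$-degree is $\prod_{f\in E\setminus e}v_f$, arising from $A=E\setminus e$ and carrying coefficient $1$ (its $q$-exponent is $0$). Comparing the top $\bv$-degree parts of $\hat Z_{M\setminus e}=(q-\rho)\hat Z_{M/e}$ then forces $\rho$ to be a scalar: any positive-degree part of $\rho$ would multiply the leading $\bv$-monomial of $\hat Z_{M/e}$ without cancellation and strictly raise the total $\bv$-degree of the right-hand side above that of the left. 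With $\rho\in K$, specialising all $v_f\mapsto 0$ (so that $\hat Z_{M/e}\mapsto q^{\,n-1}$ and $\hat Z_{M\setminus e}\mapsto q^{\,n}$) pins down $\rho=0$; but then $\hat Z_{M\setminus e}=q\hat Z_{M/e}$ has no constant term in $q$, contradicting the nonzero constant term $\sum_{A\subseteq E\setminus e,\ r_M(A)=n}\prod_{f\in A}v_f$ of $\hat Z_{M\setminus e}$. This contradiction closes the second case, and with it the induction.
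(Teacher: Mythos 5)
Your proof is correct and follows essentially the same route as the paper's: induction on the ground set via the deletion--contraction identity of Lemma~\ref{L:sum}, Tutte's dichotomy between $M\setminus e$ and $M/e$, and in the hard case the reduction to the impossible identity $\hat Z_{M\setminus e}=(q-\rho)\hat Z_{M/e}$, killed by the nonvanishing constant term of $\hat Z_{M\setminus e}$ (the paper phrases this as ``$M\setminus e$ would have no bases''). The only cosmetic differences are your packaging of the factorization analysis as a coprimality statement via Gauss's Lemma, where the paper compares coefficients of $v_e$ in a putative factorization $(U+v_eV)W$, and your use of a total-degree argument plus the specialization $\bv\mapsto 0$ to pin down $\rho=0$, where the paper simply evaluates at $q=1$.
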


\begin{proof}
The induction proof is most conveniently formulated by considering a counterexample $M$ where $r(M)$ is minimal; among those counterexamples, we pick one where $\abs{E}$ is minimal. Clearly, the result holds for $r(M)=1$, so $r(M)\ge2$. Pick $e\in E$. By Lemma \ref{L:sum}, $\hat Z_M=\hat Z_{M\setminus e}+v_e\hat Z_{M/e}$. Note that $v_e$ does not appear in $\hat Z_{M\setminus e}$ and $\hat Z_{M/e}$. If $M\setminus e$ is connected, then $\hat Z_{M\setminus e}$ is irreducible by minimality of $\abs{E}$. As $\hat Z_M$ and $\hat Z_{M\setminus e}$ have the same degree, setting $v_e=0$ shows that $\hat Z_M$ is irreducible, a contradiction.  So $M\setminus e$ is not connected, which by Tutte's theorem means that $M/e$ is connected.  So $r(M/e)\ge1$ (because $r(M)\ge2$), and $\hat Z_{M/e}$ is monic.  Note also that because $M$ is loopless, so too is $M\setminus e$, and hence $\hat Z_{M\setminus e}$ is also monic.

Now, consider a non-trivial factorization of $\hat Z_M$. Since $\hat Z_M$ is monic and linear in $v_e$, we can write $\hat Z_M=(U+v_eV)W$, where $U,V,W$ are polynomials in $K[\bv][q]$ in which $v_e$ does not appear, and where each factor has positive degree in $q$.

So $(U+v_eV)W=\hat Z_{M\setminus e}+v_e\hat Z_{M/e}$. Comparing coefficients with respect to $v_e$ gives $UW=\hat Z_{M\setminus e}$ and $VW=\hat Z_{M/e}$. By minimality of the counterexample, $\hat Z_{M/e}$ is irreducible. But $W$ has positive degree in $q$, so $V=1$ and $W=\hat Z_{M/e}$. Thus $U\hat Z_{M/e}=\hat Z_{M\setminus e}$. Now, $\hat Z_{M/e}$ and $\hat Z_{M\setminus e}$ are monic of degrees $r(M)-1$ and $r(M)$ respectively. So $U=q+\beta$ for some $\beta\in K[\bv]$.  Let $\bar \bv=\bv\setminus \{v_e\}$, and note that
\[
\hat Z_{M\setminus e}(1,\bar \bv)=\prod_{i\in E\setminus e}(1+v_i)=\hat Z_{M/e}(1,\bar \bv),
\]
so $\beta=0$. Now setting $q=0$ gives $\hat Z_{M\setminus e}(0,\bar \bv)=0$. This means that there are no bases in $M\setminus e$, which is only possible if every element of $E\setminus e$ is a loop.  So we have a contradiction.
\end{proof}

In order to prove the theorem, we need more precise information about how Galois groups behave under specializations of parameters. The next result is well-known, it follows for instance from \cite[Theorem IX.2.9]{Lang:Alg}.

\begin{Proposition}\label{P:gal}
Let $R$ be an integral domain which is integrally closed in its quotient field $F$. Let $f\in R[X]$ be monic and irreducible over $F$. Let $R\to k$, $r\mapsto\bar r$ be a homomorphism to a field $k$. If $\bar f\in k[X]$ is separable, then $\Gal(\bar f/k)$ is a subgroup of $\Gal(f/F)$.
\end{Proposition}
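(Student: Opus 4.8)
The plan is to run the classical theory of reduction of Galois groups through decomposition groups, the point being that separability of $\bar f$ is exactly what makes the reduction injective. Fix a splitting field $L$ of $f$ over $F$, write $G=\Gal(f/F)=\Gal(L/F)$, and let $\alpha_1,\dots,\alpha_n\in L$ be the roots of $f$. As $f$ is monic over $R$, each $\alpha_i$ is integral over $R$ and so lies in the integral closure $A$ of $R$ in $L$, on which $G$ acts. Fixing an algebraic closure $\Omega$ of $k$, the map $R\to k\hookrightarrow\Omega$ extends, since $A$ is integral over $R$, to a ring homomorphism $\Phi\colon A\to\Omega$. Put $\mathfrak P=\ker\Phi$ and $\bar\alpha_i=\Phi(\alpha_i)$.

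Applying $\Phi$ to $f=\prod_i(X-\alpha_i)$ gives $\bar f=\prod_i(X-\bar\alpha_i)$, which is monic of degree $n$ because $f$ is. Since $\bar f$ is separable it has $n$ distinct roots, so the $\bar\alpha_i$ are pairwise distinct; in particular the $\alpha_i$ are then distinct as well, i.e. $f$ is separable and $L/F$ is Galois. Labelling the roots by $\{1,\dots,n\}$ realizes $G$ as a subgroup of $S_n$ via $\sigma(\alpha_i)=\alpha_{\pi_\sigma(i)}$. Now set $k_0=\mathrm{Frac}(\phi(R))\subseteq k$. Because the coefficients of $\bar f$ already lie in $\phi(R)$, the polynomial $\bar f$ is defined over $k_0$, and restriction of automorphisms embeds $\Gal(\bar f/k)\hookrightarrow\Gal(\bar f/k_0)$ compatibly with the faithful actions on $\{\bar\alpha_1,\dots,\bar\alpha_n\}$. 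It therefore suffices to prove that $\Gal(\bar f/k_0)$, viewed as a permutation group on the $\bar\alpha_i$, is contained in $G$.

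To this end I would use the decomposition group $D=\{\sigma\in G:\sigma\mathfrak P=\mathfrak P\}$. Each $\sigma\in D$ descends to an automorphism of $A/\mathfrak P$ fixing the image of $R$, hence restricts to an automorphism of the splitting field $k_0(\bar\alpha_1,\dots,\bar\alpha_n)$ of $\bar f$ over $k_0$ that sends $\bar\alpha_i$ to $\bar\alpha_{\pi_\sigma(i)}$. This defines a homomorphism $D\to\Gal(\bar f/k_0)$, and it is injective: if $\sigma\in D$ fixes every $\bar\alpha_i$ then $\pi_\sigma=\mathrm{id}$ because the $\bar\alpha_i$ are distinct, whence $\sigma=1$ on $L=F(\alpha_1,\dots,\alpha_n)$. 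This injectivity — equivalently, triviality of the inertia group — is exactly where separability is used a second time.

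The one substantial input, which I expect to be the main obstacle, is the surjectivity of $D\to\Gal(\bar f/k_0)$: that every $k_0$-automorphism permuting the $\bar\alpha_i$ is induced by some $\sigma\in D$. I would obtain this from the classical surjectivity of the decomposition group onto the automorphism group of the residue field extension $\mathrm{Frac}(A/\mathfrak P)/k_0$, together with the normality of that extension (so that its automorphisms restrict onto the subextension generated by the $\bar\alpha_i$); this is precisely the decomposition--inertia theory underlying \cite[Theorem IX.2.9]{Lang:Alg}. With surjectivity and injectivity both in hand, $D\cong\Gal(\bar f/k_0)$ as permutation groups on $\{\bar\alpha_1,\dots,\bar\alpha_n\}$, so $\Gal(\bar f/k_0)\subseteq G$ and hence $\Gal(\bar f/k)\subseteq\Gal(f/F)$, as required.
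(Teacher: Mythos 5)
Your proposal is correct, and the comparison is necessarily lopsided: the paper offers no proof of this proposition at all, simply quoting it as well known with a pointer to \cite[Theorem IX.2.9]{Lang:Alg}. What you have written is a correct unwinding of the decomposition--inertia argument that underlies that citation. The key moves are all sound: passing to $k_0=\mathrm{Frac}(\phi(R))$ handles the fact that $k$ need not be the residue field; deducing distinctness of the $\alpha_i$ from distinctness of the $\bar\alpha_i$ (so that $f$ is separable and $L/F$ is Galois) is the right first use of separability, and its second use, the injectivity of $D\to\Gal(\bar f/k_0)$, is exactly the statement that the inertia group acts trivially on the roots. Two ingredients are outsourced to classical theory, which is fair game given that the paper outsources the whole statement: the extension of $R\to\Omega$ to $\Phi\colon A\to\Omega$ (extension of homomorphisms into algebraically closed fields along integral ring extensions), and the surjectivity of $D$ onto the automorphism group of the normal residue extension $\mathrm{Frac}(A/\mathfrak{P})/k_0$. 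Note that this last theorem is the one point where the hypothesis that $R$ is integrally closed in $F$ enters: it is what forces $\Gal(L/F)$ to act transitively on the primes of $A$ above $\mathfrak{p}=\ker(R\to k)$, and without it the surjectivity, and indeed the proposition itself, can fail. Incidentally, your argument never invokes irreducibility of $f$; that is a feature rather than an omission, since the result then holds for any monic $f$ with separable reduction, and this extra generality is what the paper's own application actually needs: in Lemma \ref{L:restrict} the proposition is applied to $\hat Z_{M|B}$ for intermediate sets $B$, and $M|B$ need not be connected, so $\hat Z_{M|B}$ need not be irreducible.
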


The following two lemmas can be obtained through applications of this proposition.

\begin{Lemma}\label{L:restrict}
Let $A$ be a subset of $E$. Then $\Gal(\hat Z_{M|A}/K(\bv_A))$ is a subgroup of $\Gal(\hat Z_M/K(\bv))$.
\end{Lemma}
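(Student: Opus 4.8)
The plan is to realise $M|A$ by deleting the elements of $E\setminus A$ one at a time and to show that each single deletion changes the Galois group only up to passing to a subgroup. Fix a chain $E=A_0\supset A_1\supset\dots\supset A_t=A$ with $A_i\setminus A_{i+1}=\{e\}$ a single element at each stage; since the subgroup relation is transitive, it suffices to prove, for each $i$, that $\Gal(\hat Z_{M|A_{i+1}}/K(\bv_{A_{i+1}}))$ is a subgroup of $\Gal(\hat Z_{M|A_i}/K(\bv_{A_i}))$. Doing the deletions singly is exactly what keeps the specialisation separable: setting all of $\bv_{E\setminus A}$ to zero at once would send $\hat Z_M$ to $q^{r(M)-r(M|A)}\hat Z_{M|A}$, whose repeated root at $0$ spoils separability as soon as the rank drops by more than one.

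For one step I would apply Proposition \ref{P:gal} to the evaluation $v_e\mapsto0$. Take $R=K(\bv_{A_{i+1}})[v_e]$, a polynomial ring over a field, hence integrally closed, with quotient field $F=K(\bv_{A_i})$, and let $k=K(\bv_{A_{i+1}})$ be the residue field. Expanding the defining sum and using $r_{M|A_i}(B)=r_{M|A_{i+1}}(B)$ for $B\subseteq A_{i+1}$ shows that setting $v_e=0$ in $\hat Z_{M|A_i}$ produces $q^{\delta}\hat Z_{M|A_{i+1}}$, where $\delta=r(M|A_i)-r(M|A_{i+1})\in\{0,1\}$, with $\delta=1$ precisely when $e$ is a coloop of $M|A_i$. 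As $M$, and hence every $M|A_i$, is loopless, $\hat Z_{M|A_i}$ is monic, so the hypotheses on $f$ are met. The reduced polynomial $q^{\delta}\hat Z_{M|A_{i+1}}$ has the same splitting field over $k$ as $\hat Z_{M|A_{i+1}}$, since its only extra root is $0\in k$; and because $\delta\le1$ while the constant term of $\hat Z_{M|A_{i+1}}$ in $q$ is a nonzero polynomial (it contains the monomial $\prod_{f\in A_{i+1}}v_f$), that root $0$ is simple, so the reduction is separable as soon as $\hat Z_{M|A_{i+1}}$ is.

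Separability of $\hat Z_{M|A_{i+1}}$ is routine. Decomposing $M|A_{i+1}$ into connected components $C$, each of rank $r(C)\ge1$ since $M$ is loopless, the polynomial factors as $\prod_C\hat Z_C$; each $\hat Z_C$ is irreducible by Lemma \ref{L:irr}, and distinct factors involve disjoint variables and are therefore coprime. Moreover each $\hat Z_C$ is separable, because it is irreducible and, containing the two consecutive powers $q^{r(C)}$ and $q^{r(C)-1}$ with nonzero coefficients, cannot be a polynomial in $q^{p}$ for any prime $p$. Hence $\hat Z_{M|A_{i+1}}$ is a product of distinct separable irreducible factors, and is itself separable.

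The genuine obstacle is that Proposition \ref{P:gal} requires $f=\hat Z_{M|A_i}$ to be irreducible, which fails once $M|A_i$ is disconnected along the chain. I would circumvent this by localising the deletion to the unique connected component $C^{*}$ of $M|A_i$ containing $e$: the factors $\hat Z_C$ with $C\neq C^{*}$ do not involve $v_e$ and survive the specialisation unchanged, so the proposition need only be applied to the irreducible polynomial $\hat Z_{C^{*}}$, yielding that $\Gal(\hat Z_{C^{*}\setminus e}/k)$ is a subgroup of $\Gal(\hat Z_{C^{*}}/F)$. It then remains to reassemble this component-wise inclusion into the one for $M|A_i$. For this I would use that the splitting fields of the various $\hat Z_C$ are built over pairwise disjoint sets of transcendentals and are therefore linearly disjoint over the base, so that both Galois groups split as direct products indexed by the components; enlarging only the $C^{*}$-factor to a supergroup then gives the one-step inclusion. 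I expect this linear-disjointness step --- equivalently, the direct-product decomposition over connected components --- to be the one point demanding real care, the rank bookkeeping and separability checks being elementary.
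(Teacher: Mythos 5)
Your core mechanism is exactly the paper's: reduce to a single deletion $A_i\supset A_{i+1}=A_i\setminus e$, take $R=K(\bv_{A_{i+1}})[v_e]$ with the evaluation $v_e\mapsto 0$, observe that $\hat Z_{M|A_i}$ reduces to $q^{\delta}\hat Z_{M|A_{i+1}}$ with $\delta\in\{0,1\}$, check separability of the reduction, invoke Proposition \ref{P:gal}, and finish by transitivity of the subgroup relation along the chain. (The paper's separability check is quicker than yours: a repeated irreducible factor is ruled out because the polynomial is multilinear in the $v_f$ and has nonzero constant term; your route through Lemma \ref{L:irr} and consecutive powers of $q$ also works.) Where you genuinely depart is in taking the irreducibility hypothesis of Proposition \ref{P:gal} at face value: you noticed, correctly, that $\hat Z_{M|A_i}$ need not be irreducible once $M|A_i$ is disconnected --- a point the paper silently ignores when it applies the proposition to $\hat Z_{M|B}$ for intermediate sets $B$.

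However, your repair introduces a genuine gap of its own. The claim that the splitting fields of the component polynomials $\hat Z_C$ ``are built over pairwise disjoint sets of transcendentals and are therefore linearly disjoint over the base'' is false as a general principle: over $K=\mathbb{Q}$ the polynomials $q^2+u^2$ and $q^2+w^2$ involve disjoint variables, yet both splitting fields contain $\sqrt{-1}$, so the Galois group of their product over $\mathbb{Q}(u,w)$ is $\mathbb{Z}/2\mathbb{Z}$, not the direct product. Disjointness of variables gives you for free only the \emph{injection} of the Galois group into the product of the component groups, and the invariance of a component's group under adjoining the unused transcendental $v_e$; the \emph{surjectivity} that your ``enlarge only the $C^{*}$-factor'' reassembly needs, on both the $A_i$ and $A_{i+1}$ levels, is equivalent to each splitting field of $\hat Z_C$ over $K(\bv_C)$ containing no constants outside $K$. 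That is true, but the only evident proof is that $\Gal(\hat Z_C/\overline{K}(\bv_C))$ has the same order as $\Gal(\hat Z_C/K(\bv_C))$ --- i.e.\ Theorem \ref{T:main} applied over $\overline{K}$ --- which is not available at this point and would render circular the paper's second proof of the theorem, the very proof that uses this lemma. The clean fix is different: the irreducibility hypothesis in Proposition \ref{P:gal} is superfluous, since the reduction theorem it cites holds for any monic $f\in R[X]$ with separable reduction, $\Gal(f/F)$ and $\Gal(\bar f/k)$ being understood as permutation groups on the respective root sets (indeed $\bar f$ is reducible in essentially every application of such results). With that version you can apply the proposition directly to $\hat Z_{M|A_i}$, connected or not, and your entire component-wise detour --- together with its linear-disjointness obstacle --- disappears; this is in effect what the paper does.
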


\begin{proof}
Let $B$ be such that $A\subset B\subseteq E$, and let $e$ be an element of $B\setminus A$.  Note that removing $e$ from $B$ corresponds to specializing $v_e$ to zero in $\hat Z_{M|B}$.  Let $R=K(\bv_{B\setminus e})[v_e]$, and let $I$ be the maximal ideal of $R$ generated by $v_e$.  The image of $\hat Z_M$ in the canonical homomorphism $R\to R/I$ is either $q\hat Z_{M|(B\setminus e)}$ or $\hat Z_{M|(B\setminus e)}$, depending on whether or not $e$ is a coloop.  In both cases we have a separable polynomial, as the presence of a repeated irreducible factor would contradict the fact that $\hat Z_{M|(B\setminus e)}$ is linear in the elements of $\bv_{B\setminus e}$.  Furthermore, $R$ is integrally closed in its quotient field $K(\bv)$. So we have that $\Gal(\hat Z_{M|(B\setminus e)}/K(\bv_{B\setminus e}))\leq\Gal(\hat Z_{M|B}/K(\bv_B))$ by Proposition \ref{P:gal}, and the result follows by induction.
\end{proof}

\begin{Lemma}\label{L:U+yV}
Let $y$ be a variable over the field $k$, and $U,V\in k[X]$ with $\deg V=n-1$, and $U$ monic of degree $n$ (where $n\ge2$). Suppose that $f(X)=U(X)+yV(X)$ is irreducible over $k(y)$ (which is equivalent to $U$ and $V$ being relatively prime). If $\Gal(U/k)=S_n$ or $\Gal(V/k)=S_{n-1}$, then $\Gal(f/k(y))=S_n$.
\end{Lemma}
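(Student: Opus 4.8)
The plan is to set $G=\Gal(f/k(y))$, regard it as a subgroup of $S_n$ via its action on the $n$ roots of $f$, and note that $G$ is transitive because $f$ is irreducible over $k(y)$. It then suffices to pin down a point stabilizer, and the two hypotheses are handled separately. The case $\Gal(U/k)=S_n$ is the easy one: since $U$ is monic of degree $n$ and $\deg V<n$, the polynomial $f=U+yV$ is monic in $k[y][X]$, and reduction modulo the maximal ideal $(y)$ of the discrete valuation ring $R=k[y]_{(y)}$ sends $f$ to $U$. As $\Gal(U/k)=S_n$ forces $U$ to be separable, Proposition \ref{P:gal} gives $S_n=\Gal(U/k)\le G$, hence $G=S_n$.

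For the case $\Gal(V/k)=S_{n-1}$ I would exploit the ``point at infinity''. Let $x=\alpha_1$ be one root of $f$. From $f(x)=0$ we get $y=-U(x)/V(x)$, so $y\in k(x)$ and $k(y)(x)=k(x)$, with $x$ transcendental over $k$ and $[k(x):k(y)]=\max(\deg U,\deg V)=n$. Over $k(x)$ the polynomial $f$ splits off the linear factor $X-x$, and I would study the complementary factor $\psi(X)=f(X)/(X-x)$, which is monic of degree $n-1$ with roots $\alpha_2,\dots,\alpha_n$. Since the splitting field of $f$ over $k(y)$ equals the splitting field of $\psi$ over $k(x)$, the group $\Gal(\psi/k(x))$ is exactly the stabilizer $G_{\alpha_1}$, acting on the remaining $n-1$ roots.

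Next I would specialize $x\to\infty$, i.e. reduce modulo $(s)$ in the discrete valuation ring $k[s]_{(s)}$ where $s=1/x$. Because $\deg U>\deg V$, the relation $U(X)/V(X)=U(x)/V(x)\to\infty$ forces the finite roots $\alpha_2,\dots,\alpha_n$ to approach the $n-1$ roots of $V$; consequently the coefficients of $\psi$ are regular at $s=0$ and $\psi$ reduces to the monic normalization $V_0$ of $V$. Now $\Gal(V/k)=S_{n-1}$ makes $V$ separable and irreducible over $k$, and by Gauss's lemma any monic factorization of $\psi$ over $k(x)$ would reduce to a factorization of $V_0$; irreducibility of $V$ therefore forces $\psi$ to be irreducible over $k(x)$. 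Proposition \ref{P:gal} then yields $S_{n-1}=\Gal(V_0/k)\le\Gal(\psi/k(x))\le S_{n-1}$, so $G_{\alpha_1}=S_{n-1}$. Combined with transitivity this gives $\abs{G}=n\cdot(n-1)!=n!$, whence $G=S_n$.

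The main obstacle I anticipate is the reduction-at-infinity step: one must verify rigorously that $\psi$ has good reduction at $s=0$ with $\bar\psi=V_0$ (a Newton-polygon / Hensel computation showing that exactly one root escapes to infinity while the other $n-1$ stay bounded and separate), and one must secure the irreducibility of $\psi$ that is needed to invoke Proposition \ref{P:gal}. The case $\Gal(U/k)=S_n$, by contrast, is only a warm-up that applies the proposition directly at $y=0$.
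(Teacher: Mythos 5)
Your proof is correct, and for the harder case it takes a genuinely different route from the paper's. The easy case $\Gal(U/k)=S_n$ is identical to the paper (specialization at $y=0$). For $\Gal(V/k)=S_{n-1}$, the paper never leaves the base field: it sets $t=1/y$ and replaces $f$ by $t$ times its reciprocal, $\hat f(X)=X^n\bigl(tU(1/X)+V(1/X)\bigr)$, which after division by its leading coefficient $tu+v$ is monic over the local ring $k[t]_{(t)}$; reduction at $t=0$ gives $X^nV(1/X)$, whose Galois group is $S_{n-1}$ fixing the root $0$, and transitivity then forces $S_n$ by exactly the counting you use. You instead pass to the root field $k(x)=k(y)(\alpha_1)$, identify the point stabilizer $G_{\alpha_1}$ as $\Gal(\psi/k(x))$ with $\psi=f/(X-x)$, and specialize $s=1/x\to 0$; both arguments exploit the same degeneration at $y=\infty$ (one root escapes to infinity, the rest converge to the roots of $V$), but yours works one level up, inside the stabilizer. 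The step you flag as the main obstacle does go through, and more easily than a Newton-polygon analysis: from $y=-U(x)/V(x)$ one gets $(sX-1)\psi(X)=sU(X)-\frac{sU(x)}{V(x)}V(X)$, and $\frac{sU(x)}{V(x)}$ lies in $k[s]_{(s)}$ with value $1/v_{n-1}$ at $s=0$ (where $v_{n-1}$ is the leading coefficient of $V$); comparing coefficients of $X^j$ from the bottom up shows all coefficients of $\psi$ lie in $k[s]_{(s)}$, and reduction at $s=0$ gives $\bar\psi=V/v_{n-1}=V_0$. Your Gauss-lemma argument for the irreducibility of $\psi$ over $k(x)$ is sound and is genuinely needed, since Proposition \ref{P:gal} as stated assumes irreducibility. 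As for trade-offs: the paper's version is shorter --- a single application of Proposition \ref{P:gal}, with irreducibility of $\hat f$ inherited directly from $f$ --- but it must treat the edge case $V(0)=0$ separately (there the leading coefficient $tu+v$ of $\hat f$ degenerates, and the paper disposes of it by noting that $V$ irreducible with root $0$ forces $n=2$); your specialization needs only $v_{n-1}\neq 0$, so it avoids that case split entirely, at the cost of the good-reduction computation and the auxiliary irreducibility argument.
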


\begin{proof}
First suppose that $\Gal(U/k)=S_n$. Then the assertion follows immediately from Proposition \ref{P:gal} by setting $R=k[y]$ and considering the homomorphism $R\to k$, $h(y)\mapsto h(0)$.

Now assume that $\Gal(V/k)=S_{n-1}$. Set $t=1/y$ and replace $f(X)=U(X)+yV(X)=U(X)+\frac{1}{t}V(X)$ with $t$ times the reciprocal of $f(X)$, that is set $\hat f(X)=X^n(tU(1/X)+V(1/X))$. Clearly, $k(t)=k(y)$ and $\Gal(f/k(y))=\Gal(\hat f/k(t))$. The coefficient of $X^n$ in $\hat f$ is $tu+v$, where $u$ and $v$ are the constant terms of $U$ and $V$. If $v=0$, then $V$ has the root $0$. However, $V$ is irreducible since $\Gal(V/k)=S_{n-1}$. So $n=2$. The result clearly holds in this case because $f$ is then irreducible of degree $2$.

So assume $v\ne0$. Let $R\subset k(t)$ be the localization of $k[t]$ with respect to the ideal $(t)$, so $R$ consists of the fractions $p(t)/q(t)$ with $q(0)\ne0$. Note that $\frac{1}{tu+v}\hat f$ is monic with coefficients in $R$. Also, $R$ (as a local ring) is integrally closed in $k(t)$. Let $R\to k$ be the homomorphism given by $p(t)/q(t)\mapsto p(0)/q(0)$. Proposition \ref{P:gal} then gives $\Gal(\hat f/k(t))\ge\Gal(X^nV(1/X)/k)=S_{n-1}$. Because $\Gal(\hat f/k(t))$ is transitive on the $n$ roots of $\hat f$, we must have $\Gal(\hat f/k(t))=S_n$.

\end{proof}

We are now ready to prove Theorem \ref{T:main}.

\begin{proof}[First proof of Theorem \ref{T:main}]
Again assume that the matroid $M$ is a counterexample with $r_M(E)$ minimal, and among these cases pick one with $\abs{E}$ minimal. Note that the statement is trivially true if $r(M)=1$, thus $r(M)\ge2$ in the minimal counterexample.

Pick $e\in E$. By Lemma \ref{L:sum} $\hat Z_M=\hat Z_{M\setminus e}+v_e\hat Z_{M/e}$. Let $\bar \bv=\bv\setminus \{v_e\}$, and set $k=K(\bar \bv)$. Recall that $\hat Z_M$ is irreducible over $k(v_e)$ by Lemma \ref{L:irr}. We have seen above that $r(M\setminus e)=r(M)=n$ and $r(M/e)=n-1$. As established previously, either $M\setminus e$ or $M/e$ is connected. By assuming a minimal counterexample we have $\Gal(\hat Z_{M\setminus e}/k)=S_n$ or $\Gal(\hat Z_{M/e}/k)=S_{n-1}$. Theorem \ref{T:main} then follows from Lemma \ref{L:U+yV}.
\end{proof}

We will now present an alternative proof of Theorem \ref{T:main}. While it is less efficient than the above proof, it uses a group-theoretical inductive process which is perhaps more intuitive.  We will need to first prove that the theorem holds for circuits.

\begin{Lemma}\label{L:circuit}
Let $C\subseteq E$ be a circuit of a finite matroid $M$.  Then $\Gal(\hat Z_{M|C}/K(\bv_C))=S_{r_M(C)}$.
\end{Lemma}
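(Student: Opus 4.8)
The plan is to prove the statement by induction on $n=r_M(C)$, mirroring the deletion--contraction recursion of Lemma \ref{L:sum} but now run entirely inside the circuit. Since $C$ is a circuit, $M|C$ is the uniform matroid on $\abs{C}=n+1$ elements of rank $r_M(C)=n$; in particular $\hat Z_{M|C}$ depends only on $n$ and the variables $\bv_C$. The base case $n=1$ is immediate: there $\hat Z_{M|C}$ is monic and linear in $q$, so it splits over $K(\bv_C)$ and its Galois group is the trivial group $S_1$.

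For the inductive step I would assume $n\ge2$ and pick any $e\in C$. The key combinatorial observation is the identification of the two minors. Deleting $e$ leaves the independent set $C\setminus e$, so $(M|C)\setminus e$ is the free matroid on $C\setminus e$ and hence
\[
\hat Z_{(M|C)\setminus e}=\prod_{i\in C\setminus e}(q+v_i),
\]
which is monic of degree $n$ and splits completely over $k:=K(\bv_{C\setminus e})$. Contracting $e$ turns $C\setminus e$ into a circuit on $n$ elements, so $(M|C)/e$ is itself a circuit, of rank $n-1$. By Lemma \ref{L:sum},
\[
\hat Z_{M|C}=\hat Z_{(M|C)\setminus e}+v_e\,\hat Z_{(M|C)/e},
\]
and I would set $U=\hat Z_{(M|C)\setminus e}$, $V=\hat Z_{(M|C)/e}$, and $y=v_e$, so that $U$ is monic of degree $n$ and $V$ has degree $n-1$.

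I would then invoke Lemma \ref{L:U+yV}. Its irreducibility hypothesis is supplied directly by Lemma \ref{L:irr}, since a circuit is a connected matroid and hence $\hat Z_{M|C}=U+v_eV$ is irreducible over $k(v_e)=K(\bv_C)$. Because $U$ splits over $k$, its Galois group is trivial, so the branch $\Gal(U/k)=S_n$ of Lemma \ref{L:U+yV} is unavailable; instead I apply the inductive hypothesis to the circuit $(M|C)/e$ on the ground set $C\setminus e$, which gives $\Gal(V/k)=\Gal(\hat Z_{(M|C)/e}/K(\bv_{C\setminus e}))=S_{n-1}$. Lemma \ref{L:U+yV} then yields $\Gal(\hat Z_{M|C}/K(\bv_C))=S_n$, closing the induction.

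The only genuine point requiring care is the matroid bookkeeping in the inductive step: one must verify that deletion produces the free matroid (so that $U$ factors into linear terms, which is precisely what forces us onto the $V$-branch of Lemma \ref{L:U+yV}) and that contraction produces a smaller circuit of rank \emph{exactly} $n-1$ (so that the inductive hypothesis delivers the correct symmetric group $S_{n-1}$). Both follow from the standard rank computations $r((M|C)\setminus e)=n$ and $r((M|C)/e)=n-1$ recorded just before Lemma \ref{L:sum}. Everything else is a direct citation of the lemmas already established, so I expect no serious obstacle beyond this bookkeeping.
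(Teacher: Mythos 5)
Your proof is correct, but it takes a genuinely different route from the paper's. The paper proves the lemma by direct computation: since every proper subset of $C$ has rank equal to its cardinality and $r_M(C)=\abs{C}-1$, the coefficients of $\hat Z_{M|C}$ are $\sigma_1,\dots,\sigma_{n-1},\sigma_n+\sigma_{n+1}$, where the $\sigma_i$ are the elementary symmetric polynomials in $\bv_C$; these are algebraically independent over $K$, and a monic polynomial with algebraically independent coefficients is well known to have the full symmetric group as Galois group. Your argument instead runs the deletion--contraction induction inside the circuit: you correctly identify $(M|C)\setminus e$ as the free matroid (so $U$ splits over $k$, forcing you onto the $V$-branch) and $(M|C)/e$ as a circuit of rank exactly $n-1$ (and indeed $C\setminus e$ is a circuit of $(M|C)/e$, since each proper subset $A\subsetneq C\setminus e$ has rank $\abs{A}$ there), then combine Lemma \ref{L:sum}, Lemma \ref{L:irr} and the $\Gal(V/k)=S_{n-1}$ branch of Lemma \ref{L:U+yV}. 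All three cited lemmas precede this one in the paper and do not depend on it, so there is no circularity, and every hypothesis of Lemma \ref{L:U+yV} is verified; the proof stands. The trade-off is architectural rather than logical: your argument is essentially the paper's first proof of Theorem \ref{T:main} specialized to circuits (where deletion always disconnects once $n\ge2$), so it re-imports Lemma \ref{L:U+yV} and the specialization machinery behind Proposition \ref{P:gal}. But Lemma \ref{L:circuit} exists in the paper precisely to seed the \emph{second}, group-theoretic proof of Theorem \ref{T:main}, which is advertised as an alternative to that machinery; with your proof, the second proof of the theorem would no longer be independent of the first. The paper's computation also buys slightly more: it exhibits the coefficients of $\hat Z_{M|C}$ as algebraically independent, which is exactly the observation that motivates the paper's third proof.
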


\begin{proof}
The rank of any proper subset of $C$ is the same as its cardinality, and $r_M(C)=\abs{C}-1$, so:
\[\hat Z_{M|C}(q,\bv)=q^n+\sigma_1 q^{n-1}+\sigma_2 q^{n-2}+\ldots +\sigma_{n-1}q+ (\sigma_n+\sigma_{n+1}),\]
where $\sigma_i$ is the $i$th elementary symmetric polynomial in the $\{v_e\}_{e\in C}$ for each $i$.  The elementary symmetric polynomials are algebraically independent, and thus so too are the coefficients of $\hat Z_{M|C}(q,\bv)$.  It is well known that the Galois group of a polynomial with algebraically independent coefficients is the full symmetric group.
\end{proof}

\begin{proof}[Second proof of Theorem \ref{T:main}]
Let $C$ be a circuit of maximum cardinality in $M$.  By Lemma \ref{L:circuit}, $\Gal(\hat Z_{M|C}/K(\bv_C))=S_{r_M(C)}$.  This will serve as the base case for the induction.

Now, let $A$ be any proper subset of $E$ such that $C\subseteq A$ and $M|A$ is connected, and suppose that $\Gal(\hat Z_{M|A}/K(\bv_A))=S_{r_M(A)}$.  Identify a non-empty independent set $B\subseteq E\setminus A$ of minimal size such that $M|(A\cup B)$ is connected, and let $A'=(A\cup B)$.  We will show that $\Gal(\hat Z_{M|A'}/K(\bv_{A'}))=S_{r_M(A')}$.

By \cite[Lemma 1.3.1]{Oxley:Book} $r_M(A')\leq r_M(A)+r_M(B)$.  By maximality of $C$, any circuit of $M|A'$ has rank at most $r_M(C)$.  By minimality of $B$, any circuit of $M|A'$ not contained in $M|A$ must include at least one element of $A$, so $r_M(B)\leq r_M(C)-1$, and we have $r_M(A')\leq r_M(A)+r_M(C)-1$.

By Lemma \ref{L:restrict}, $S_{r_M(A)}=\Gal(\hat Z_{M|A}/K(\bv_A))\leq \Gal(\hat Z_{M|A'}/K(\bv_{A'}))$.  So $\Gal(\hat Z_{M|A'}/K(\bv_{A'}))$ must contain at least one transposition.  Let $H$ be the group generated by all of the transpositions in $\Gal(\hat Z_{M|A'}/K(\bv_{A'}))$; then $H$ is a direct product of symmetric groups.  As $\Gal(\hat Z_{M|A'}/K(\bv_{A'}))$ is transitive, each of these symmetric groups must have the same degree $i$, which must therefore divide the degree of $\Gal(\hat Z_{M|A'}/K(\bv_{A'}))$. By Lemma \ref{L:irr}, $\hat Z_{M|A'}$ is irreducible, and its Galois group must therefore be transitive of degree $r_M(A')$.  So we have that $ji=r_M(A')$ for some positive integer $j$.

Now, $S_{r_M(A)}$ contains at least one of the transpositions of $H$, so must be a subgroup of one the $S_i$, which means $r_M(A)\leq i$.  So we have: \[jr_M(A)\leq ji=r_M(A')\leq r_M(A)+r_M(C)-1.\]
Suppose that $j\geq 2$. Then $2r_M(A)\leq r_M(A)+r_M(C)-1$, and so $r_M(A)\leq r_M(C)-1$.  This is impossible, as $C\subset A$.  So $j=1$, and hence $i=r_M(A')$.  This means that $H$ is a direct product of symmetric groups of degree $r_M(A')$.  But $H$ is a subgroup of $\Gal(\hat Z_{M|A'}/K(\bv_{A'}))$, which is transitive of degree $r_M(A')$, and so $\Gal(\hat Z_{M|A'}/K(\bv_{A'}))=H=S_{r_M(A')}.$
\end{proof}

Now, in view of the proof of Lemma \ref{L:circuit}, one might wonder if the coefficients of $\hat Z_{M}(q,\bv)$ are algebraically independent for \emph{any} finite connected matroid. This does indeed turn out to be the case, leading us to our third and final proof of Theorem \ref{T:main}.

\begin{proof}[Third proof of Theorem \ref{T:main}]
Let $M$ be a finite connected matroid of rank $r(M)=n\ge1$, and write $\hat Z_{M}(q,\bv)=q^n+a_{n-1}q^{n-1}+\dots+a_1q+a_0\in K[\bv][q]$, where $K$ is an arbitrary field. It suffices to show that the coefficients $a_0,a_1,\dots,a_{n-1}$ are algebraically independent over $K$.

If $n=1$, then $\hat Z_{M}(q,\bv)=q-1+\prod_{e\in E}(v_e+1)$, so the claim clearly holds. Thus we may assume $n\ge2$.

Assume that $M$ is a counterexample in which $\abs{E}$ is minimal. We will use the deletion-contraction identity $\hat Z_M=\hat Z_{M\setminus e}+v_e\hat Z_{M/e}$ of Lemma \ref{L:sum}.  First consider the case that $M\setminus e$ is connected. By the assumption of a minimal counterexample, the coefficients of $\hat Z_{M\setminus e}$ (excluding the leading coefficient $1$) are algebraically independent over $K$. However, these coefficients arise from the coefficients $a_0,a_1,\dots,a_{n-1}$ upon setting $v_e=0$. Of course, an algebraic dependency relation of $a_0,a_1,\dots,a_{n-1}$ over $K$ remains an algebraic dependency relation upon setting $v_e=0$, a contradiction.

Thus $M\setminus e$ is not connected, so we may assume that $M/e$ is connected.  For each $0\leq i\leq n-1$, write $a_i=b_i+v_ec_i$, where $b_i$ and $c_i$ are polynomials in the elements of $\bv_{E\setminus e}$. Each $c_j$ is then the coefficient of $q^j$ in $\hat Z_{M/e}$, so $c_{n-1}=1$ (as $r(M/e)=n-1$) and $c_0,c_1,\dots,c_{n-2}$ are algebraically independent over $K$. As $a_0,a_1,\dots,a_{n-1}$ are algebraically dependent, there is a non-zero polynomial $P$ in $n$ variables over $K$ such that
\[
P(b_0+v_ec_0,\dots,b_{n-2}+v_ec_{n-2},b_{n-1}+v_e)=0.
\]
Let $Q$ be the expansion of $P$ with respect to $v_e$, so that $Q$ is a polynomial in $v_e$ with coefficients in $K[\bv_{E\setminus e}]$.  As the elements of $\bv$ are algebraically independent, these coefficients must be identically zero. Let $d$ be the total degree of $P$.  Then $Q$ has degree $d$ in $v_e$, and the $v_e^d$ term must arise from a $K$-linear sum of products of the form:
\[(b_0+v_ec_0)^{d_0}\dots(b_{n-2}+v_ec_{n-2})^{d_{n-2}}(b_{n-1}+v_e)^{d_{n-1}},\]
where $d_0,\dots,d_{n-1}$ are non-negative integers which sum to $d$.  This means that the coefficient of $v_e^d$ in $Q$ is a $K$-linear combination of monomials of the form $c_0^{d_0}\dots c_{n-2}^{d_{n-2}}$, where $d_i\ge0$ for each $i$, and $d_0+\dots +d_{n-2}\le d$.  The vanishing of this coefficient then implies that the set of such monomials is linearly dependent over $K$, which contradicts our assertion that $c_0,\dots,c_{n-2}$ are algebraically dependent over $K$.
\end{proof}

\begin{Remark}
Sokal showed that the the multivariate Tutte polynomial for matroids factorizes over summands (see \cite[(4.4)]{Sokal:Tutte}).  That is, if $M$ is the direct sum of connected matroids $M_1,M_2$ on the sets $E_1,E_2$ respectively (where $E_1$ and $E_2$ are disjoint and $E=E_1\cup E_2$) then:
\[\hat Z_M(q,\bv)=\hat Z_{M_1}(q,\bv_{E_1})\hat Z_{M_2}(q,\bv_{E_2}).\]
As $\bv_{E_1}$ and $\bv_{E_2}$ are disjoint, there are clearly no algebraic dependencies between the roots of $\hat Z_{M_1}$ and $\hat Z_{M_2}$, so we have that
\[\Gal (\hat Z_M/K(\bv))=\Gal(\hat Z_{M_1}/K(\bv_{E_1}))\times\Gal(\hat Z_{M_2}/K(\bv_{E_2})).\]
Theorem \ref{T:main} then implies that the Galois group of the multivariate Tutte polynomial of any matroid is a direct product of symmetric groups corresponding to the connected direct summands.
\end{Remark}

Finally, we computed the Galois group of the bivariate Tutte polynomial $T_G(x,y)$ over $\mathbb{Q}(y)$ for every biconnected graph $G$ of order $n\leq 10$, and found that all were the symmetric group of degree $n-1$.  As the Tutte polynomial of any connected matroid is irreducible over fields of characteristic zero (as noted in \cite{Merino:Tutte}, this is not necessarily the case for fields of positive characteristic), this would seem to suggest the following:

\begin{Conjecture}\label{C:Tutte}
Let $M$ be a finite connected matroid with positive rank $n=r(M)$, and let $K$ be a field of characteristic zero.  Then the Galois group of the Tutte polynomial $T_M(x,y)$ over $K(y)$ is the symmetric group of degree $n$.
\end{Conjecture}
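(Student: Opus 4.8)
The plan is to derive the conjecture from Theorem \ref{T:main} by treating $T_M$ as the diagonal one-parameter specialization of $\hat Z_M$. Writing $u=x-1$ and $w=y-1$, the defining substitution $q\leftarrow uw$, $v_e\leftarrow w$ together with the prefactor $(y-1)^{-n}$ gives $T_M(x,y)=(y-1)^{-n}\hat Z_M\bigl((x-1)(y-1),\{w\}\bigr)$, so for fixed $y$ the roots of $T_M$ in $x$ are in affine bijection with the roots of the specialization $\hat Z_M(q,\{w\})$ in $q$, via $x_j=1+q_j/w$. Hence the splitting fields agree and $\Gal(T_M/K(y))=\Gal(\hat Z_M(q,\{w\})/K(w))$, with $K(w)=K(y)$. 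Applying Proposition \ref{P:gal} to the homomorphism $K[\bv]\to K[w]$, $v_e\mapsto w$, yields the upper bound $\Gal(T_M/K(y))\le S_n$ (the separability needed is automatic in characteristic zero, where irreducibility of $T_M$ forces it). The entire content of the conjecture is the reverse inclusion: collapsing all the $v_e$ to one parameter must not shrink the group.

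For the reverse inclusion I would verify the three hypotheses that force a transitive group of degree $n$ to be $S_n$, namely transitivity, primitivity, and a transposition (Jordan). Transitivity is immediate, as $T_M$ is irreducible over $K(y)$ in characteristic zero by \cite{Merino:Tutte}. To produce a transposition I would study the discriminant $\Delta(y)=\operatorname{disc}_x T_M(x,y)\in K[y]$: away from $y=1$ and $y=\infty$ the branch points of the cover $\{T_M=0\}\to\mathbb{P}^1_y$ are the roots of $\Delta$, corresponding to coincidences $q_i=q_j$ among the roots of $\hat Z_M(q,\{w\})$. At a place where $\Delta$ has a simple zero and $T_M$ acquires a single double root, the inertia is generated by a transposition lying in $\Gal(T_M/K(y))$. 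Since the coefficients of $\hat Z_M$ are algebraically independent over $K$ (third proof of Theorem \ref{T:main}), the geometric monodromy over $\mathbb{A}^E$ is also $S_n$, so the cover is simply branched along a dense open part of its branch divisor; the task is then to show that the diagonal line $\{v_e=w\}$ meets that open part transversally for at least one value of $w$.

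The hard part will be primitivity, and behind it the non-genericity of the diagonal. Restricting the $\abs{E}$-dimensional parameter space to a single line, I cannot appeal to the Hilbert irreducibility theorem or to a Bertini-type pencil argument: these preserve the full group only for a \emph{general} line or a \emph{general} rational specialization, whereas $\{v_e=w\}$ is a fixed, highly special line, and it is precisely along such loci that monodromy can drop or acquire a block system. The inductive deletion--contraction strategy of the first two proofs also seems blocked, since the identity $T_M=T_{M\setminus e}+T_{M/e}$ offers no free variable to play the role that $v_e$ plays in Lemma \ref{L:U+yV}, and deletion is no longer realized as a coordinate specialization, so there is no Tutte analogue of Lemma \ref{L:restrict}. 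I therefore expect the decisive and genuinely characteristic-zero step to be a uniform proof, across all connected matroids, that the collapsed cover stays primitive; ruling out every imprimitive block structure uniformly is where I anticipate the real difficulty, and presumably why the statement remains only a conjecture.
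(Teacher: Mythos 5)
The statement you were asked to prove is Conjecture \ref{C:Tutte}: the paper itself offers no proof, only computational evidence (the Galois group was verified to be the full symmetric group for every biconnected graph of order at most $10$) together with the remark that the conjecture would \emph{imply} Theorem \ref{T:main} in characteristic zero, since $T_M$ is essentially the diagonal specialization of $\hat Z_M$. So there is no paper proof to compare yours against, and your proposal --- as you yourself say in its final sentence --- is not a proof either: it is a correct reduction plus an accurate diagnosis of why the problem is open.

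The sound parts of your reduction are the identification $K(w)=K(y)$ with the affine bijection $x=1+q/w$ between root sets, giving $\Gal(T_M/K(y))=\Gal(\hat Z_M(q,\{w\})/K(y))$; transitivity from irreducibility in characteristic zero via \cite{Merino:Tutte}; and the observation that Proposition \ref{P:gal} only ever gives the \emph{upper} bound, i.e.\ that the specialized group embeds in the generic group $S_n$ of Theorem \ref{T:main}. The genuine gap is exactly where you locate it: every tool the paper uses to get a \emph{lower} bound --- the deletion--contraction splitting of Lemma \ref{L:sum} with its free variable $v_e$ feeding into Lemma \ref{L:U+yV}, the restriction Lemma \ref{L:restrict} (which realizes deletion as the coordinate specialization $v_e\mapsto 0$), and the algebraic independence of the coefficients in the third proof --- depends on having one independent variable per ground-set element, and all of it collapses on the line $v_e=w$ for all $e$. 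Nothing in your proposal supplies the missing lower bound after this collapse: your transposition argument needs the diagonal line to meet the simple-branching locus of the cover, which you do not establish, and you concede primitivity outright. Since that lower bound is the entire content of the conjecture, your text should be counted as an analysis of the obstruction rather than a proof --- which, to your credit, is how you present it, and it matches the paper's own implicit assessment in leaving the statement as a conjecture.
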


As remarked previously, the bivariate Tutte polynomial is essentially a specialization of the multivariate version.  This means that Theorem \ref{T:main} would follow from a proof of Conjecture \ref{C:Tutte} for fields of characteristic zero.

Interestingly, specializing the Tutte polynomial further produces a range of different Galois groups.  For example, it was shown in \cite{pjc11} that all of the transitive permutation groups of degree at most 5 apart from $C_5$ appear as Galois groups of just one family of chromatic polynomials. Furthermore, Morgan \cite{Morgan:Thesis} showed that a range of transitive groups of higher degree occur for chromatic polynomials of graphs on up to 10 vertices.

\bibliographystyle{plain}
\bibliography{mybib}

\end{document}